\renewcommand{\theequation}                            
       {\mbox{\arabic{section}.\arabic{equation}}}
\theoremstyle{plain}
\newtheorem{definition}{Definition}[section]
\newtheorem{lemma}[definition]{Lemma}
\newtheorem{theorem}[definition]{Theorem}
\newtheorem{corollary}[definition]{Corollary}
\theoremstyle{definition}
\newtheorem{remark}[definition]{Remark}
\renewcommand{\mathbb}{\mathbbm}                     
\renewcommand{\epsilon}{\varepsilon}                 
\renewcommand{\phi}{\varphi}
\renewcommand{\theta}{\vartheta}
\renewcommand{\le}{\leqslant}
\renewcommand{\ge}{\geqslant}
\newcommand{\origfoo}{} \let\origfoo=\sqrt           
\renewcommand{\sqrt}[1]{\origfoo{#1}\;}
\newcommand{\abs}[1]{\left\lvert #1 \right\rvert}    
\newcommand{\norm}[1]{\left\lVert #1 \right\rVert}   
\DeclareMathOperator{\F}{{\cal F}}                   
\DeclareMathOperator{\R}{{\mathbb R}}                
\DeclareMathOperator{\C}{{\mathbb C}}                
\DeclareMathOperator{\N}{{\mathbb N}}                
\DeclareMathOperator{\I}{I}                          %
\DeclareMathOperator{\Borel}{{\mathcal B}}
\newcommand{\scapro}[2]{\langle #1,#2\rangle}       
\DeclareMathOperator{\1}{\mathbbm 1}
\renewcommand{\S}{{\mathcal S}}
\renewcommand{\L}{{\mathcal L}}
\renewcommand{\H}{{\mathcal H}}
\DeclareMathOperator{\Z}{{\mathcal Z}}
\DeclareMathOperator{\Cc}{{\mathcal C}}
\title{Stochastic integration
with respect to \\cylindrical L{\'e}vy processes in Hilbert spaces: \\an $L^2$ approach}
\author{ Markus Riedle\footnote{markus.riedle@kcl.ac.uk}{}\\
Department of Mathematics\\
King's College\\
London WC2R 2LS\\
United Kingdom }
\begin{document}
\maketitle
\begin{abstract}
In this work stochastic integration with respect to cylindrical L{\'e}vy processes with weak second moments
is introduced. It is well known that a deterministic Hilbert-Schmidt operator
radonifies a cylindrical random variable, i.e. it maps a cylindrical random variable to a classical Hilbert space valued
random variable. Our approach is based on a generalisation of this result to
the radonification of the cylindrical increments of a cylindrical L{\'e}vy process  by {\em random} Hilbert-Schmidt operators.
This generalisation enables us to introduce a Hilbert space valued random variable as the
stochastic integral of a predictable stochastic process with respect to a cylindrical
L{\'e}vy process.
We finish this work by deriving an It{\^o} isometry  and by considering shortly
stochastic partial differential equations driven by cylindrical L{\'e}vy processes.
\end{abstract}
\hspace*{0.5cm}

Key words: stochastic integration; L{\'e}vy process; cylindrical process; Hilbert space; stochastic partial differential equation.

\section{Introduction}

Stochastic differential equations in Hilbert spaces and as their very prerequisite,  stochastic integrals in Hilbert spaces,  have been extensively studied since the 1960s. One can find various early publications for different classes of integrands and different integrators, see for example Curtain and Falb \cite{CurtainFalb} for integration with respect to  Wiener processes, Kunita \cite{Kunita},  M{\'e}tivier and Pistone \cite{MetivierPistone} for integration with respect to square integrable martingales and M{\'e}tivier \cite{Metivier82}  for integration with respect to semi-martingales.

Since there does not exist a Wiener process with independent components along the basis of an infinite dimensional Hilbert space, the most common models of random noises are either the so-called cylindrical Wiener process or the space-time white noise (which can be seen as a special case of the cylindrical Wiener process). Integration with respect to  cylindrical Wiener processes is developed for example in
Daletskij \cite{Daletskij}, followed by the articles Gaveau \cite{Gaveau}, Lepingle and Ouvrard \cite{LepingleOuvrard} and others.
Integration with respect to space-time white noise is derived in the seminal work by Walsh \cite{Walsh}. The recent work by Dalang and Quer-Sardanyons \cite{DalangQuer} compares both approaches.

Surprisingly, integration with respect to other cylindrical processes than the cylindrical Wiener process is only considered in a few works. In fact, we are only aware of two approaches to integration with respect to cylindrical martingales, which origin either in the work developed by  M{\'e}tivier  and Pellaumail in  \cite{MetivierPellcylindrical} and \cite{MetivierPell} or
by Mikulevi\v{c}ius and Rozovski\v{\i} in \cite{MikRoz98} and \cite{MikRoz99}.
The construction by  M{\'e}tivier  and Pellaumail is based on  Dol\'eans measures
whereas Mikulevi\v{c}ius and Rozovski\v{\i} use a family of reproducing kernel Hilbert spaces. Thus, both constructions rely heavily on the assumed existence of weak second moments. In M{\'e}tivier  and Pellaumail \cite{MetivierPellcylindrical}, the construction is extended to cylindrical local martingales.

The increasing attention to models with L{\'e}vy noise led recently to
the concept of {\em cylindrical L{\'e}vy processes}. The notion {\em cylindrical L{\'e}vy process} appears the first time in Peszat and Zabczyk \cite{PeszatZab} and it is followed by the works of Brze\'zniak et al \cite{Zdzisetal}, Brze\'zniak and Zabzcyk \cite{ZdzisJerzy} and Priola and Zabczyk \cite{PriolaZabczyk}. The first systematic
introduction of cylindrical L{\'e}vy processes is presented in our work
Applebaum and Riedle \cite{DaveMarkus}. In this article \cite{DaveMarkus} we introduce a very general, discontinuous noise occurring in  time and state space as a cylindrical L{\'e}vy process,  which is a natural generalisation of a cylindrical Wiener process. We follow in \cite{DaveMarkus} the classical approach to cylindrical measures and cylindrical processes, as presented for example in Badrikian \cite{Badrikian} and Schwartz \cite{Schwartz}.

The main objective of this work is to establish a theory of stochastic integration with respect to  cylindrical L{\'e}vy processes with weak second moments.
Although this situation is covered by the above-mentioned more general work by M{\'e}tivier and Pellaumail (\cite{MetivierPellcylindrical}, \cite{MetivierPell}) and Mikulevi\v{c}ius and Rozovski\v{\i} (\cite{MikRoz98}, \cite{MikRoz99}),
the special case of cylindrical L{\'e}vy processes deserves their own treatment.
For,  the independent and stationary increments of a cylindrical L{\'e}vy process enable us to develop a straightforward integration theory in this work, which does not only mimic the approach in case of a classical L{\'e}vy process but it is also based on much easier and more familiar arguments.
Our approach immediately exposes the need to restrict the class of admissible integrands to the set of stochastic processes with values in the space of Hilbert-Schmidt operators. However more important, in contrary to the above-mentioned more general work, our construction of the stochastic integral does not rely on the weak second moments but only the proofs are based on this property.
For this reason, we expect that our construction in this work will serve as the basis to develop an integration theory with respect to cylindrical L{\'e}vy processes without assuming finite weak second moments.
Since a cylindrical L{\'e}vy process can not be decomposed into a cylindrical local martingale and
and a cylindrical bounded variation process, this situation is not covered by the  work mentioned above.

The construction of the stochastic integral is based on the following observation:
 the increments of a cylindrical process are probabilistically described by cylindrical measures, i.e. finitely additive set functions whose projections to Euclidean spaces are always probability measures. It is well known, as a consequence of
 Sazonov's theorem, that the image cylindrical measure $\mu\circ T^{-1}$ of a cylindrical measure $\mu$ under a linear operator $T$ extends to a Radon measure if $T$ is a Hilbert-Schmidt operator. Thus, by following It{\^o}'s approach to stochastic integration by firstly integrating simple stochastic processes,
  the extension of this approach to stochastic integration with respect to cylindrical processes requires a generalisation of this result to {\em random} Hilbert-Schmidt operators. This approach is motivated by the work of Jakubowski et al \cite{Jaketal}, where the authors show that a single deterministic Hilbert-Schmidt operator transfers a cylindrical semi-martingale to a classical semi-martingale in the underlying Hilbert space.

In Section  \ref{se.RandomHS} we show that indeed a random Hilbert-Schmidt operator maps the cylindrical increments to a classical Radon random variable in the Hilbert space. Once this result is derived we follow the classical approach and define an integral operator on the space of simple stochastic processes. By showing
the continuity of this operator in Section \ref{se.integration},
we can define the stochastic integral on the completion of the space of simple processes under an appropriate  topology. To illustrate the usage and applicability of the developed theory, we  consider stochastic partial differential equations driven by cylindrical L{\'e}vy processes in Section \ref{se.diff}.

With referring to the celebrated Bichteler-Dellacherie-Mokobodzki Theorem, our results can be summarised as identifying
 the cylindrical L{\'e}vy process as a {\em good integrator}. The stochastic integration of simple stochastic processes with respect
 to the cylindrical L{\'e}vy process
defines a continuous and linear operator. This property is assumed
to be satisfied in the work by Kurtz and Protter \cite{KurtzProtter96}, in order to derive weak limit results of
 stochastic integrals with respect to infinite dimensional semi-martingales and their generalisations, the so-called $H^{\#}$-semi-martingales, which also include cylindrical semi-martingales.

\section{Preliminaries}

Let $U$ be a separable Hilbert space with scalar product $\scapro{\cdot}{\cdot}$,
norm $\norm{\cdot}_U$ and orthonormal basis $\{e_k\}_{k\in\N}$. The Borel $\sigma$-algebra in $U$ is denoted by  $\Borel(U)$ and the closed unit ball at the origin by $B_U:=\{u\in U:\, \norm{u}_U\le 1\}$. The dual space is identified with $U$.

For a measure space $(S,\S,\mu)$ we denote by $L^p_\mu(S,\S;U)$, $p\ge 0$, the space of
equivalence classes of measurable functions $f:S\to U$ with $\int
\norm{f(s)}_U^p\,\mu(ds)<\infty$.

For every $u_1,\dots, u_n\in U$ and $n\in\N$ we define a linear map
\begin{align*}
  \pi_{u_1,\dots, u_n}:U\to \R^n,\qquad
   \pi_{u_1,\dots, u_n}(u)=(\scapro{u}{_1},\dots,\scapro{u}{u_n}).
\end{align*}
Let $\Gamma$ be a subset of $U$. Sets of the form
 \begin{align*}
Z(u_1,\dots, u_n;B):&= \{u\in U:\, (\scapro{u}{u_1},\dots,
 \scapro{u}{u_n})\in B\}\\
 &= \pi^{-1}_{u_1,\dots, u_n}(B),
\end{align*}
where $u_1,\dots, u_n\in \Gamma$ and $B\in \Borel(\R^n)$ are called {\em cylindrical sets}. The set of all cylindrical sets is denoted by $\Z(U,\Gamma)$ and it is an algebra. The generated $\sigma$-algebra is denoted by
$\Cc(U,\Gamma)$ and it is called the {\em cylindrical $\sigma$-algebra with
respect to $(U,\Gamma)$}. If $\Gamma=U$ we write $\Z(U):=\Z(U,\Gamma)$ and
$\Cc(U):=\Cc(U,\Gamma)$.

A function $\mu:\Z(U)\to [0,\infty]$ is called a {\em cylindrical measure on
$\Z(U)$}, if for each finite subset $\Gamma\subseteq U$ the restriction of
$\mu$ to the $\sigma$-algebra $\Cc(U,\Gamma)$ is a measure. A cylindrical
measure is called finite if $\mu(U)<\infty$ and a cylindrical probability
measure if $\mu(U)=1$.
For every function $f:U\to\C$ which is measurable with respect to
$\Cc(U,\Gamma)$ for a finite subset $\Gamma\subseteq U$ the integral $\int
f(a)\,\mu(da)$ is well defined as a complex valued Lebesgue integral if it
exists. In particular, the characteristic function $\phi_\mu:U\to\C$ of a
finite cylindrical measure $\mu$ is defined by
\begin{align*}
 \phi_{\mu}(u):=\int_U e^{i\scapro{u}{a}}\,\mu(du)\qquad\text{for all }a\in
 U^\ast.
\end{align*}

Let $(\Omega, \F,P)$ be a complete probability space.
Similarly to the correspondence between measures and random variables, there is
an analogous random object associated to cylindrical measures:
a {\em cylindrical random variable $Y$ in $U$} is a linear and continuous map
\begin{align*}
 Y:U \to L^0_P(\Omega,\F;\R),
\end{align*}
where the space $L^0_P(\Omega,\F;\R)$ is equipped with the topology of convergence in probability.
A cylindrical process $Y$ in $U$ is a family $(Y(t):\,t\ge 0)$ of
cylindrical random variables in $U$.

The characteristic function of a cylindrical random  variable $Y$ is
defined by
\begin{align*}
 \phi_Y:U\to\C, \qquad \phi_Y(u)=E[\exp(iYu)].
\end{align*}
If $Z=Z(u_1,\dots, u_n;B)$ is a cylindrical set for
$u_1,\dots, u_n\in U$ and $B\in \Borel(\R^n)$ we obtain a cylindrical
probability measure $\mu$ by the prescription
\begin{align*}
  \mu(Z):=P((Yu_1,\dots, Yu_n)\in B).
\end{align*}
We call $\mu$ the {\em cylindrical distribution of $Y$} and the
characteristic functions $\phi_\mu$ and $\phi_Y$ of $\mu$ and $Y$
coincide. Conversely, for every cylindrical measure $\mu$ on
$\Z(U)$ there exists a probability space $(\Omega,\F,P)$ and a
cylindrical random variable $Y:U\to L^0_P(\Omega,\F;\R)$ such
that  $\mu$ is the cylindrical distribution of $Y$, see
\cite[VI.3.2]{Vaketal}.

In the work \cite{DaveMarkus} we introduce the concept of {\em cylindrical L{\'e}vy processes}. A
cylindrical process $(L(t):\, t\ge 0)$ in $U$ is called a cylindrical L{\'e}vy process
if for every $u_1,\dots, u_n\in U$ and $n\in\N$ the stochastic process
\begin{align*}
  \big((L(t)u_1,\dots, L(t)u_n):\, t\ge 0\big)
\end{align*}
is a L{\'e}vy process in $\R^n$. In the following we will equip the probability space $(\Omega,\F,P)$
with the filtration $\{\F_t\}_{t\ge 0}$ generated by a cylindrical L{\'e}vy process and
defined by
\begin{align*}
  \F_t:=\sigma(\{L(s)u:\, u\in U, \, s\in [0,t]\})
  \qquad\text{for all }t\ge 0.
\end{align*}

The definition of a cylindrical L{\'e}vy process is a straightforward generalisation of a cylindrical Wiener process
if the latter is defined analogously: a cylindrical process
$(W(t):\, t\ge 0)$ is called a {\em cylindrical Wiener process} if
for every $u_1,\dots, u_n$ and $n\in\N$ the stochastic process
\begin{align*}
  \big((W(t)u_1,\dots, W(t)u_n):\, t\ge 0\big)
\end{align*}
is a Wiener process in $\R^n$. Here we call an adapted stochastic process $(X(t) : t \ge 0)$
in $\R^n$ a Wiener process if the increments $X(t) - X(s)$ are independent, stationary and
normally distributed with expectation $E[X(t) - X(s)] = 0$ and covariance $Cov[X(t) -
X(s),X(t)-X(s)] = |t - s|C$ for a non-negative, definite symmetric matrix $C$. This approach of defining
a cylindrical Wiener process can be found for example in the monographs  \cite{KallianpurXiong} and
\cite{MetivierPell} and it is recently reviewed in \cite{Riedle:wiener}.

The characteristic function $\phi_{L(t)}:U\to\C$, $t\ge 0$ of a cylindrical L{\'e}vy process $L$ is given by
\begin{align}\label{eq.charth}
 \phi_{L(t)}(u)&\\
 = \exp &\left(t\left(ip(u)- \tfrac{1}{2}q(u)+\int_U \left(e^{i\scapro{u}{a}}-1-i\scapro{u}{a}
   \1_{B_{\R}}(\scapro{u}{a})\right)   \, \nu (da)\right)\right),\notag
\end{align}
where $p:U\to\R$ is a continuous mapping, $q:U\to \R$ is a quadratic form and $\nu$ is a cylindrical measure on $\Z(U)$ such that $\nu\circ \scapro{\cdot}{u}^{-1}$ is the L{\'e}vy measure on $\Borel(\R)$ of $(L(t)u:\, t\ge 0)$ for each
$u\in U$. This result is derived in \cite{DaveMarkus} with some
addenda in \cite{Riedle}. The triplet $(p,q,\nu)$ is called the
{\em cylindrical characteristics of $L$}.

In this work we only consider cylindrical L{\'e}vy processes $(L(t):\, t\ge 0)$  with existing weak second moments that
is $E[\abs{L(1)u}^2 ]<\infty$ for all $u\in U$. In this case, the closed mapping theorem implies that
the operator $L(t):U\to L^2_P(\Omega,\F;\R)$ is continuous for all $t\ge 0$. We call the cylindrical process $(L(t):\, t\ge 0)$ a {\em cylindrical martingale} if $(L(t)u:\, t\ge 0)$ is a real-valued martingale for every $u\in U$.

\section{Random Hilbert-Schmidt operators}\label{se.RandomHS}

Let $V$ be another separable Hilbert space with scalar product $\scapro{\cdot}{\cdot}$, norm $\norm{\cdot}_V$ and  orthonormal basis $\{f_k\}_{k\in\N}$. We also identify the dual space of $V$ with $V$.

 The space of all linear bounded operators $\phi:U\to V$ is denoted by $\L(U,V)$. An operator
$\phi\in \L(U,V)$ is called a Hilbert-Schmidt operator if
\begin{align*}
  \sum_{k=1}^\infty \norm{\phi e_k}_V^2< \infty.
\end{align*}
The space of all Hilbert-Schmidt operators is denoted by $\L_2(U,V)$ and it can be equipped
with a scalar product defined by
\begin{align*}
  \scapro{\phi}{\psi}:=\sum_{k=1}^\infty \scapro{\phi e_k}{\psi e_k}.
\end{align*}
It follows that $\L_2(U,V)$ is complete and thus a Hilbert space with norm
\begin{align*}
  \norm{\phi}_{\L_2}:=\left(\sum_{k=1}^\infty \norm{ \phi e_k}_V^2\right)^{1/2}.
\end{align*}

Let $(L(t):\, t\ge 0)$ be a cylindrical L{\'e}vy process in $U$ with weak second moments. We consider
for fixed times $0\le s\le t$ the cylindrical random variable $\Delta L:=L(t)-L(s):U\to L^2_P(\Omega,\F;\R)$.
The space of $2$-radonifying operators coincides with the space of Hilbert-Schmidt operators in Hilbert spaces. Therefore, for every $\phi\in \L_2(U,V)$ there exists a random variable $X_\phi\in L^2_P(\Omega,\F;V)$ such that
\begin{align}\label{eq.radoni}
  (\Delta L)(\phi^\ast v)= \scapro{X_\phi}{v} \qquad\text{for all }v\in V.
\end{align}
By linearity of $\Delta L$,  the equality \eqref{eq.radoni} can be generalised to random Hilbert-Schmidt operators if they are simple. An $\L_2(U,V)$-valued, $\F_s$-measurable random variable $\Phi$
is called {\em simple} if it is of the form
\begin{align}\label{eq.simple}
  \Phi(\omega)=\sum_{i=1}^m \1_{A_i}(\omega) \phi_i\qquad\text{for all }\omega\in\Omega,
\end{align}
for disjoint sets $A_1,\dots, A_m\in \F_s$, $\phi_1,\dots, \phi_m\in \L_2(U,V)$ and $m\in\N$. The space of all $\L_2(U,V)$-valued, $\F_s$-measurable, simple random variables is
denoted by $S(\Omega, \F_s;\L_2)$. For later reference we equip the space $S(\Omega, \F_s;\L_2)$ with the semi-norm
\begin{align*}
  \norm{\Phi}_S:=\left(E\left[\norm{\Phi}_{\L_2}^2\right]\right)^{1/2}.
\end{align*}

If $\Phi$ is of the form \eqref{eq.simple} then \eqref{eq.radoni} implies  that for each $i=1,\dots, m$
 there exists a random
variable $X_{\phi_i}\in L^2_P(\Omega,\F; V)$ such that  $(\Delta L)(\phi_i^\ast v)=\scapro{X_{\phi_i}}{v}$ for all $v\in V$ and we can define a random variable
\begin{align*}
  J_{s,t}(\Phi):\Omega\to V,
  \qquad J_{s,t}(\Phi)=\1_{A_1}X_{\phi_1}+\dots +\1_{A_m}X_{\phi_m}.
\end{align*}
In this way, one obtains a linear operator
\begin{align}\label{eq.defJ}
  J_{s,t}: S(\Omega, \F_s;\L_2)\to L^2_P(\Omega,\F;V).
\end{align}
For each $v\in V$ the random variable $J_{s,t}(\Phi)$ satisfies
\begin{align*}
\scapro{J_{s,t}(\Phi)}{v}
 =\sum_{i=1}^m \1_{A_i} \scapro{X_{\phi_i}}{v}
 =\sum_{i=1}^m \1_{A_i} (\Delta L)(\phi_i^\ast v),
\end{align*}
which, together with the linearity of $\Delta L$, motivates us to define
\begin{align}\label{eq.defincL}
  \big(L(t)-L(s)\big)(\Phi^\ast v):= \scapro{J_{s,t}(\Phi)}{v}
  \qquad\text{for all }v\in V.
\end{align}
\begin{remark}
  Our definition in \eqref{eq.defincL} of the application of the cylindrical random variable $\Delta L$ to the
  $U$-valued random variable $\Phi^\ast v$ is based on the linearity of $\Delta L$ and the simple form
  of $\Phi^\ast v$.  However, in general it is not possible to define the application of a cylindrical random variable $Z:U\to L^0_P(\Omega,\F;\R)$ to an $U$-valued random variable $Y:\Omega\to U$  $\omega$-wise.
For, if $\omega\in\Omega$ is fixed the random variable $Z(Y(\omega))$ can be arbitrarily defined on
a null set $N_\omega$ depending on $\omega$. Consequently, the mapping $\omega\to Z(Y(\omega))(\omega)$
might be arbitrarily defined on a set which is not a null set if $\Omega$ is not countable.
\end{remark}

\begin{theorem}\label{th.radonifying}
  For $0\le s\le t$  let $J_{s,t}$ be the operator defined in \eqref{eq.defJ}. Then $J_{s,t}$ is
  continuous and satisfies
\begin{align*}
    \norm{J_{s,t}}_{S\to L^2_P}\le \norm{L(t-s)}_{U\to L^2_P}.
\end{align*}
\end{theorem}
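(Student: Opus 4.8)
The plan is to estimate $E\big[\norm{J_{s,t}(\Phi)}_V^2\big]$ directly for a simple random variable $\Phi=\sum_{i=1}^m\1_{A_i}\phi_i$ and to show that it is dominated by $\norm{L(t-s)}_{U\to L^2_P}^2\,E\big[\norm{\Phi}_{\L_2}^2\big]=\norm{L(t-s)}_{U\to L^2_P}^2\,\norm{\Phi}_S^2$. Since $\norm{J_{s,t}(\Phi)}_{L^2_P}=\big(E[\norm{J_{s,t}(\Phi)}_V^2]\big)^{1/2}$, taking the supremum over $\norm{\Phi}_S\le 1$ then yields both the continuity of $J_{s,t}$ and the asserted operator-norm bound in one stroke.

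First I would exploit the disjointness of $A_1,\dots,A_m$. Since $J_{s,t}(\Phi)=\sum_i\1_{A_i}X_{\phi_i}$ and the indicators have disjoint supports, all cross terms in $\norm{\cdot}_V^2$ vanish $\omega$-wise, so that $\norm{J_{s,t}(\Phi)}_V^2=\sum_{i=1}^m\1_{A_i}\norm{X_{\phi_i}}_V^2$. Taking expectations reduces the task to controlling each summand $E\big[\1_{A_i}\norm{X_{\phi_i}}_V^2\big]$, and in parallel one has $E\big[\norm{\Phi}_{\L_2}^2\big]=\sum_i P(A_i)\norm{\phi_i}_{\L_2}^2$ by the same disjointness.

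The crucial step is the factorisation $E\big[\1_{A_i}\norm{X_{\phi_i}}_V^2\big]=P(A_i)\,E\big[\norm{X_{\phi_i}}_V^2\big]$. This rests on the fact that $A_i\in\F_s$, while $X_{\phi_i}$ is a function of the increment $\Delta L=L(t)-L(s)$ alone, together with the independence of the increments of a cylindrical L{\'e}vy process from the past $\F_s$. Concretely, $X_{\phi_i}$ is measurable with respect to $\sigma(\{(\Delta L)(\phi_i^\ast v):v\in V\})\subseteq\sigma(\{(\Delta L)(u):u\in U\})$; for each finite family $u_1,\dots,u_n$ the process $(L(r)u_1,\dots,L(r)u_n)$ is a genuine L{\'e}vy process in $\R^n$, so its increment over $[s,t]$ is independent of the $\sigma$-algebra generated by its values on $[0,s]$, and a monotone-class argument over the generating cylindrical sets promotes this to independence of $\sigma(\{(\Delta L)(u):u\in U\})$ from the whole of $\F_s$. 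I expect this independence transfer to be the main technical point requiring care, since the definition of a cylindrical L{\'e}vy process supplies it only at the level of finite-dimensional projections.

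Finally I would expand $E\big[\norm{X_{\phi_i}}_V^2\big]$ in the orthonormal basis $\{f_k\}$ of $V$. By the radonification identity \eqref{eq.radoni} one has $\scapro{X_{\phi_i}}{f_k}=(\Delta L)(\phi_i^\ast f_k)$, so Parseval gives $E\big[\norm{X_{\phi_i}}_V^2\big]=\sum_k E\big[\abs{(\Delta L)(\phi_i^\ast f_k)}^2\big]$. Stationarity of the increments identifies $E\big[\abs{(\Delta L)(u)}^2\big]=E\big[\abs{L(t-s)u}^2\big]$, and the definition of the operator norm of the continuous map $L(t-s):U\to L^2_P$ bounds this by $\norm{L(t-s)}_{U\to L^2_P}^2\norm{u}_U^2$. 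Summing over $k$ and using $\sum_k\norm{\phi_i^\ast f_k}_U^2=\norm{\phi_i^\ast}_{\L_2}^2=\norm{\phi_i}_{\L_2}^2$ yields $E\big[\norm{X_{\phi_i}}_V^2\big]\le\norm{L(t-s)}_{U\to L^2_P}^2\norm{\phi_i}_{\L_2}^2$. Reassembling the pieces gives $E\big[\norm{J_{s,t}(\Phi)}_V^2\big]\le\norm{L(t-s)}_{U\to L^2_P}^2\sum_i P(A_i)\norm{\phi_i}_{\L_2}^2=\norm{L(t-s)}_{U\to L^2_P}^2\,E\big[\norm{\Phi}_{\L_2}^2\big]$, which is exactly the required estimate.
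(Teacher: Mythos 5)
Your proposal is correct and takes essentially the same route as the paper's proof, which likewise combines the disjointness of the $A_i$, the independence of $\Delta L=L(t)-L(s)$ from $\F_s$ (exploited there via conditioning, see \eqref{eq.condex}), a Parseval expansion along the basis $\{f_j\}$, the bound $E\left[\abs{(\Delta L)u}^2\right]\le \norm{\Delta L}_{U\to L^2_P}^2\norm{u}_U^2$, and the identity $\sum_j\norm{\Phi^\ast f_j}_U^2=\norm{\Phi}_{\L_2}^2$; you merely organize the computation atom-by-atom over the $A_i$ rather than basis-vector-by-basis-vector. The only substantive additions in your write-up are points the paper uses tacitly: the $\pi$-system/monotone-class upgrade of the finite-dimensional independence in the definition of a cylindrical L\'evy process to independence of $\sigma(\{(\Delta L)u:\,u\in U\})$ from $\F_s$, and the stationarity step identifying $\norm{L(t)-L(s)}_{U\to L^2_P}$ with $\norm{L(t-s)}_{U\to L^2_P}$.
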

\begin{proof}
  Let $\Phi\in S(\Omega, \F_s;\L_2)$ be of the form \eqref{eq.simple}. The independence of $\Delta L:=L(t)-L(s)$ and $\F_s$
  implies for each $v\in V$ that
  \begin{align}\label{eq.condex}
    E\left[\abs{(\Delta L)(\Phi^\ast v)}^2\right]
    &= E\left[ \abs{\sum_{i=1}^m \1_{A_i} (\Delta L)(\phi_i^\ast v)}^2\right]\notag\\
    &= \sum_{i=1}^m  E\left[ \abs{\1_{A_i} (\Delta L)(\phi_i^\ast v)}^2\right]\notag\\
     &= \sum_{i=1}^m E\left[ E\left[\abs{\1_{A_i} (\Delta L)(\phi_i^\ast v)}^2|\F_s\right]\right]\notag\\
    &= \sum_{i=1}^m \int_{\Omega} E\left[ \abs{\1_{A_i}(\omega) (\Delta L)(\phi_i^\ast v)}^2\right] \, P(d\omega)\notag\\
   &= \int_{\Omega} E\left[ \abs{(\Delta L)(\Phi^\ast(\omega) v)}^2\right] \, P(d\omega).
  \end{align}
Applying this equality results in
\begin{align*}
  E\left[\norm{J_{s,t}(\Phi)}_V^2\right]
    &= \sum_{j=1}^\infty E\left[\abs{\scapro{J_{s,t}(\Phi)}{f_j}}^2\right]\\
  &= \sum_{j=1}^\infty E\left[\abs{(\Delta L)(\Phi^\ast f_j)}^2\right]\\
   &= \sum_{j=1}^\infty \int_{\Omega} E\left[ \abs{(\Delta L)(\Phi^\ast(\omega) f_j)}^2\right] \, P(d\omega)\\
  &\le \norm{\Delta L}_{U\to L^2_P}^2\sum_{j=1}^\infty E\left[\norm{\Phi^\ast f_j}_U^2\right]\notag\\
  &= \norm{\Delta L}_{U\to L^2_P}^2 E\left[\norm{\Phi}_{\L_2}^2\right],
\end{align*}
which completes the proof.
\end{proof}

We denote the Bochner space $L^2_P(\Omega,\F_s;\L_2(U,V))$ by $L^2_P(\Omega,\F_s;\L_2)$.
Since the operator $J_{s,t}$ is continuous on $S(\Omega,\F_s;\L_2)$  and due to the well known fact that $S(\Omega,\F_s;\L_2)$
is dense in the Bochner space $L^2_P(\Omega,\F_s;\L_2)$ we can uniquely extend the operator $J_{s,t}$ to the
latter space. In this way, for each $\Phi\in L^2_P(\Omega,\F_s;\L_2)$ we define
\begin{align*}
  (L(t)-L(s))(\Phi^\ast v):=
  \scapro{J_{s,t}\Phi}{v}
   \qquad\text{for all }v\in V.
\end{align*}

\begin{remark}
  A Hilbert-Schmidt operator $\phi\in \L_2(U,V)$ is not only 2-radonifying but even 0-radonifying. That means, even without  requiring that $L$ has weak second moments,   there exists
   a classical random variable $X_\phi:\Omega\to V$ such that
 $ (L(t)-L(s))(\phi^\ast v)= \scapro{X_\phi}{v}$ for all $v\in V$. However, the $V$-valued random variable
 $X_\phi$ might not have finite moments.
One can expect that the analogue result of Theorem \ref{th.radonifying} can be modified such that the $P$-a.s. convergence of a sequence $\{\Phi_n\}_{n\in\N}\subseteq S(\Omega,\F_s;\L_2)$ to $\Phi\in L^0_P(\Omega,\F_s;\L_2)$ implies that $\{J_{s,t}(\Phi_n)\}_{n\in\N}$ is a Cauchy sequence in $L_P^0(\Omega,\F;V)$, equipped with the topology of convergence in probability.
\end{remark}

\section{Stochastic integration}\label{se.integration}

An $\L_2(U,V)$-valued, stochastic process $(\Psi(t):\, t\in [0,T])$ is called {\em simple} if it is of the form
\begin{align}\label{eq.defsimple}
  \Psi(t)=\sum_{k=0}^{N-1} \Phi_k \1_{(t_k,t_{k+1}]}(t),
\end{align}
where $0=t_0< t_1< \cdots < t_N=T$ is a finite sequence of
deterministic times  and $\Phi_k$ is a random variable  in $L^2_P(\Omega,\F_{t_k};\L_2)$ for each
$k=0,\dots, N-1$.
The set of all simple $\L_2(U,V)$-valued stochastic processes is denoted by $\H_0(\L_2)$ and it is
endowed with the semi-norm $\norm{\cdot}_{\H}$ defined by
\begin{align*}
  \norm{\Psi}_{\H}:=\left(E\left[\int_0^T \norm{\Psi(s)}_{\L_2}^2\, ds\right]\right)^{1/2}.
\end{align*}
It is well known (see for example \cite[Pro. 4.7]{DaPratoZab}) that $\H_0(\L_2)$ is dense
in the space
\begin{align*}
\H(\L_2):=
\left\{\Psi:[0,T]\times \Omega\to \L_2(U,V):\text{ predictable, }
 E\left[\int_0^T \norm{\Psi(s)}_{\L_2}^2\, ds\right]<\infty\right\},
\end{align*}
equipped with the same norm $\norm{\cdot}_{\H}$.

Let $(\Psi(t):\,t\in [0,T])$ be a simple process in $\H_0(\L_2)$ of the form \eqref{eq.defsimple} and $(L(t):\,t\ge 0)$ be
a cylindrical L{\'e}vy process in $U$ with weak second moments. According  to Theorem \ref{th.radonifying} we can
define $X_k:=J_{t_k,t_{k+1}}(\Phi_k)$ for each $k=0,\dots, N-1$ to obtain a random variable $X_k\in L^2_P(\Omega,\F;V)$
such that
\begin{align*}
  \big(L(t_{k+1})-L(t_k)\big)(\Phi_k^\ast v)
  =\scapro{X_k}{v}\qquad\text{for all }v\in V.
\end{align*}
Thus,  we can  define a random variable in $L^2_P(\Omega,\F;V)$ by
\begin{align*}
I(\Psi):\Omega\to V,
 \qquad I(\Psi):=  X_0 + \cdots + X_{N-1}
\end{align*}
and we call $I(\Psi)$ the {\em stochastic integral of $\Psi$ with respect to $L$}. Obviously, it satisfies
\begin{align*}
 \scapro{I(\Psi)}{v} = \sum_{k=0}^{N-1} \scapro{X_k}{v}
=  \sum_{k=0}^{N-1} \big(L(t_{k+1})-L(t_k)\big)(\Phi_k^\ast v)
\end{align*}
for all $v\in V$.

\begin{theorem}\label{th.intcont}
  The operator $I:\H_0(\L_2)\to L^2_P(\Omega,\F;V)$ is continuous.
\end{theorem}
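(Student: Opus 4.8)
The goal is to produce a constant $c>0$, depending only on $L$ and $T$, such that $\bigl(E[\norm{I(\Psi)}_V^2]\bigr)^{1/2}\le c\,\norm{\Psi}_\H$ for every simple $\Psi$ of the form \eqref{eq.defsimple}. Since the intervals $(t_k,t_{k+1}]$ are disjoint, $\norm{\Psi}_\H^2=\sum_{k=0}^{N-1}(t_{k+1}-t_k)\,E[\norm{\Phi_k}_{\L_2}^2]$, so the natural first attempt is to write $I(\Psi)=\sum_k X_k$ with $X_k=J_{t_k,t_{k+1}}(\Phi_k)$, estimate $\norm{I(\Psi)}_{L^2_P}\le\sum_k\norm{X_k}_{L^2_P}$ by the triangle inequality, and bound each summand through Theorem \ref{th.radonifying} by $\norm{L(t_{k+1}-t_k)}_{U\to L^2_P}\bigl(E[\norm{\Phi_k}_{\L_2}^2]\bigr)^{1/2}$. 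This is too lossy: after a Cauchy--Schwarz step the resulting constant grows with the number $N$ of subintervals, because the triangle inequality discards the cancellation between the $X_k$. The essential point must therefore be to exploit the independence of the increments of $L$ from the past $\sigma$-algebras $\F_{t_k}$ in order to make the mixed terms $E[\scapro{X_j}{X_k}]$, $j<k$, vanish (or be controllable).

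To isolate the martingale mechanism from the drift I would first record that the weak second moments make $\ell(u):=E[L(1)u]$ a continuous linear functional on $U$, so $\ell(u)=\scapro{g}{u}$ for some $g\in U$ by Riesz representation, and that $M(t)u:=L(t)u-t\ell(u)$ defines a cylindrical L\'evy martingale with weak second moments, to which Theorem \ref{th.radonifying} applies equally. Using $\ell(\Phi_k^\ast v)=\scapro{\Phi_k g}{v}$, the defining relation \eqref{eq.defincL} then splits as $X_k=Y_k+(t_{k+1}-t_k)\Phi_k g$, where $Y_k:=J^M_{t_k,t_{k+1}}(\Phi_k)$ is the radonification of $\Phi_k$ against the martingale increment. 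Summing over $k$ gives $I(\Psi)=\sum_{k=0}^{N-1}Y_k+\int_0^T\Psi(s)g\,ds$, the last term being a pathwise Bochner integral.

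For the martingale part the mixed terms vanish: for $j<k$ the factor $\scapro{Y_j}{f_l}=(M(t_{j+1})-M(t_j))(\Phi_j^\ast f_l)$ is $\F_{t_k}$-measurable, while conditioning on $\F_{t_k}$ and freezing the $\F_{t_k}$-measurable argument $\Phi_k^\ast f_l$ gives $E[(M(t_{k+1})-M(t_k))(\Phi_k^\ast f_l)\mid\F_{t_k}]=0$, since $M$ has zero mean; this is the same conditioning as in \eqref{eq.condex}. Hence $E[\norm{\sum_k Y_k}_V^2]=\sum_k E[\norm{Y_k}_V^2]$, and Theorem \ref{th.radonifying} together with the scaling $\norm{M(h)}_{U\to L^2_P}^2=h\,\norm{M(1)}_{U\to L^2_P}^2$ (valid because $M(h)u$ has zero mean and variance $h\,\mathrm{Var}(M(1)u)$) bounds this by $\norm{M(1)}_{U\to L^2_P}^2\,\norm{\Psi}_\H^2$. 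For the drift part, Cauchy--Schwarz in the time variable together with $\norm{\Psi(s)g}_V\le\norm{\Psi(s)}_{\L_2}\norm{g}_U$ yields $E[\norm{\int_0^T\Psi(s)g\,ds}_V^2]\le T\norm{g}_U^2\,\norm{\Psi}_\H^2$. The triangle inequality in $L^2_P(\Omega,\F;V)$ then gives the claim with $c=\norm{M(1)}_{U\to L^2_P}+\sqrt{T}\,\norm{g}_U$.

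The main obstacle is precisely this orthogonality step: one must recognise that the term-by-term estimate fails and convert the independence of the increments into orthogonality of the summands. The martingale/drift decomposition exposes this cleanly by isolating the part whose conditional increment vanishes, after which the two pieces are dispatched by Theorem \ref{th.radonifying} and by an elementary Bochner estimate. A secondary technical point is the justification of freezing the $\F_{t_k}$-measurable argument inside the cylindrical increment, but this is the same manipulation already performed in \eqref{eq.condex}.
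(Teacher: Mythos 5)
Your argument is correct and is, at its core, the same proof as the paper's: split $L$ into a deterministic drift plus a cylindrical L\'evy martingale, kill the mixed terms of the martingale part by conditioning on $\F_{t_k}$, bound each increment by Theorem \ref{th.radonifying} combined with the linear-in-time scaling of second moments, and dispose of the drift by Cauchy--Schwarz in time; your constant $\norm{M(1)}_{U\to L^2_P}+\sqrt{T}\norm{g}_U$ agrees with the paper's $2\norm{\tilde{L}(1)}^2_{U\to L^2_P}+2T\norm{\tilde{p}}^2_U$ up to bookkeeping. Two differences deserve comment. First, a genuine streamlining: you obtain $\norm{M(h)}^2_{U\to L^2_P}=h\norm{M(1)}^2_{U\to L^2_P}$ from the elementary additivity of variances of a zero-mean real L\'evy process, whereas the paper derives $E[|(\tilde{L}(t)-\tilde{L}(s))u|^2]=(t-s)E[|\tilde{L}(1)u|^2]$ by decomposing $\tilde{L}=W+P$ into Gaussian and compensated Poissonian parts and computing both variances from the characteristics --- a step that even requires a footnote because $W(t)$ and $P(t)$ need not individually be continuous; your route avoids this entirely (the Riesz representation of the drift by $g\in U$ is a further, purely cosmetic, difference). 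Second, the one place you are too quick is what you call the ``secondary technical point'': \eqref{eq.condex}, and with it the freezing of the $\F_{t_k}$-measurable argument, is established only for \emph{simple} $\Phi_k\in S(\Omega,\F_{t_k};\L_2)$, while for general $\Phi_k\in L^2_P(\Omega,\F_{t_k};\L_2)$ the quantity $(M(t_{k+1})-M(t_k))(\Phi_k^\ast v)$ is \emph{defined} only as $\scapro{J^M_{t_k,t_{k+1}}\Phi_k}{v}$ through the continuous extension of $J^M_{t_k,t_{k+1}}$, so the identity $E[\scapro{Y_k}{f_l}\,|\,\F_{t_k}]=0$ must be transported through that extension by approximating $\Phi_k$ with simple random variables and using the $L^2$-continuity of $J^M_{t_k,t_{k+1}}$ and of conditional expectation. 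This is routine, but it is exactly why the paper proves Lemma \ref{le.simplesimple} and runs the whole estimate on $\H_0^S(\L_2)$ before passing to $\H_0(\L_2)$; your proof needs that approximation step spelled out to be complete.
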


Since the application of the cylindrical random variable $L(t)-L(s)$ to  a random variable in $U$
is defined by the extension of the operator $J_{s,t}$,
it is often convenient to work with a subspace of $\H_0(\L_2)$. For that purpose,  let $\H_{0}^{S}(\L_2)$
denote the space of all $\L_2(U,V)$-valued stochastic processes $(\Psi(t):\, t\in [0,T])$ which are of the form
\begin{align}
  \Psi(t)=\sum_{k=0}^{N-1} \Phi_k \1_{(t_k,t_{k+1}]}(t),
\end{align}
where $0=t_0< t_1< \cdots < t_N=T$ is a finite sequence of
deterministic times  and $\Phi_k$ is a  {\em simple} random variable  in $S(\Omega,\F_{t_k};\L_2)$ for each
$k=0,\dots, N-1$.
\begin{lemma}\label{le.simplesimple}
  For every $\Psi\in \H_0(\L_2)$ there exists a sequence $\{\Psi_n\}_{n\in\N}\subseteq \H_0^S(\L_2)$ such that  $\norm{\Psi-\Psi_n}_{\H}\to 0$ and $ \norm{I(\Psi)-I(\Psi_n)}_{L_P^2}\to 0$ as $n\to\infty$.
\end{lemma}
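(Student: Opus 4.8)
The plan is to approximate $\Psi$ coefficientwise on the fixed partition $0=t_0<\cdots<t_N=T$ appearing in its representation \eqref{eq.defsimple}, replacing each $\Phi_k\in L^2_P(\Omega,\F_{t_k};\L_2)$ by a simple random variable drawn from $S(\Omega,\F_{t_k};\L_2)$. The key input is exactly the density fact that was already used to extend $J_{s,t}$ after Theorem \ref{th.radonifying}, namely that $S(\Omega,\F_{t_k};\L_2)$ is dense in the Bochner space $L^2_P(\Omega,\F_{t_k};\L_2)$. This lets me choose, for each fixed $k$, a sequence $\Phi_k^{(n)}\in S(\Omega,\F_{t_k};\L_2)$ with $E[\norm{\Phi_k-\Phi_k^{(n)}}_{\L_2}^2]\to 0$ as $n\to\infty$. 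Keeping the same partition for every $n$, I would then set
\begin{align*}
  \Psi_n(t):=\sum_{k=0}^{N-1}\Phi_k^{(n)}\1_{(t_k,t_{k+1}]}(t),
\end{align*}
so that $\Psi_n\in\H_0^S(\L_2)$ by construction.

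For the first convergence I would use that the indicators $\1_{(t_k,t_{k+1}]}$ have disjoint supports, so the $\H$-norm decouples across $k$ and
\begin{align*}
  \norm{\Psi-\Psi_n}_{\H}^2
  =\sum_{k=0}^{N-1}(t_{k+1}-t_k)\,E\!\left[\norm{\Phi_k-\Phi_k^{(n)}}_{\L_2}^2\right]
\end{align*}
is a finite sum of terms each tending to zero. For the second convergence I would exploit the linearity of $I$ and of each $J_{t_k,t_{k+1}}$, which gives $I(\Psi)-I(\Psi_n)=\sum_{k=0}^{N-1}J_{t_k,t_{k+1}}(\Phi_k-\Phi_k^{(n)})$. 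Since $\Phi_k-\Phi_k^{(n)}\in L^2_P(\Omega,\F_{t_k};\L_2)$, the triangle inequality in $L^2_P(\Omega,\F;V)$ together with the norm bound of Theorem \ref{th.radonifying}, carried over to the continuous extension of $J_{t_k,t_{k+1}}$ to the Bochner space, applied term by term yields
\begin{align*}
  \norm{I(\Psi)-I(\Psi_n)}_{L^2_P}
  \le\sum_{k=0}^{N-1}\norm{L(t_{k+1}-t_k)}_{U\to L^2_P}\,
      \Big(E\!\left[\norm{\Phi_k-\Phi_k^{(n)}}_{\L_2}^2\right]\Big)^{1/2},
\end{align*}
again a finite sum converging to zero. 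Alternatively this second convergence drops out immediately from the first via the continuity of $I$ in Theorem \ref{th.intcont}, but the termwise estimate has the advantage of relying only on Theorem \ref{th.radonifying} and so carries no risk of circularity.

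There is no serious obstacle here; the argument is essentially bookkeeping. The one point requiring attention is that the approximants must be chosen both $\F_{t_k}$-measurable and simple, so that $\Psi_n$ genuinely lies in $\H_0^S(\L_2)$ and not merely in $\H_0(\L_2)$, and that a single common partition is used for all $n$, so that both $\Psi-\Psi_n$ and $I(\Psi)-I(\Psi_n)$ split into finite sums indexed by $k$. Both requirements are secured simply by invoking the density of $S(\Omega,\F_{t_k};\L_2)$ in $L^2_P(\Omega,\F_{t_k};\L_2)$ separately for each fixed $k$, which is where the whole argument rests.
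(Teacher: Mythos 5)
Your proof is correct and follows essentially the same route as the paper: keep the fixed partition, approximate each $\Phi_k$ by simple random variables from $S(\Omega,\F_{t_k};\L_2)$ via density in the Bochner space, and control $\norm{I(\Psi)-I(\Psi_n)}_{L^2_P}$ through the continuity bound for $J_{t_k,t_{k+1}}$ from Theorem \ref{th.radonifying}. The only cosmetic difference is that you use the triangle inequality where the paper applies the Cauchy--Schwarz inequality (incurring a harmless factor $N-1$), and your remark about avoiding Theorem \ref{th.intcont} is well taken, since its proof in the paper relies on this very lemma and invoking it here would indeed be circular.
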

\begin{proof}
  Let $\Psi$ be a  simple stochastic process in $\H_0(\L_2)$ of the form
\begin{align*}
  \Psi(t)= \sum_{k=0}^{N-1} \Phi_k \1_{(t_k,t_{k+1}]}(t)\qquad\text{for all }t\in [0,T],
\end{align*}
where $0=t_0< t_1< \cdots < t_N=T$ is a finite sequence of deterministic times  and each
$\Phi_k$ is an arbitrary random  variable  in $L^2_P(\Omega,\F_{t_k};\L_2)$ for $k=0,\dots, N-1$.
 For each $k\in \{0,\dots, N-1\}$ there exists a sequence
$\{\Phi_{k,n}\}_{n\in\N}\subseteq S(\Omega,\F_{t_k};\L_2)$ of simple $\L_2(U,V)$-valued random variables  which converges to
$\Phi_k$ in $L^2_P(\Omega,\F_{t_k};\L_2)$. For each $n\in\N$ the stochastic process
$(\Psi_n(t):\, t\in [0,T])$ defined by
\begin{align*}
  \Psi_n(t):=\sum_{k=0}^{N-1} \Phi_{k,n}\1_{(t_k,t_{k+1}]}(t)\qquad\text{for all }t\in [0,T],
\end{align*}
is in $\H_0^S(\L_2)$ and the sequence $\{\Psi_n\}_{n\in\N}$ satisfies
\begin{align*}
\norm{\Psi-\Psi_n}_{\H}^2&=
  E\left[\int_0^T \norm{\Psi(s)-\Psi_n(s)}^2_{\L_2}\, ds\right]\\
  &= \sum_{k=0}^{N-1}(t_{k+1}-t_k) E\left[ \norm{\Phi_k-\Phi_{k,n}}^2_{\L_2}\right]
  \to 0 \qquad\text{as }n\to\infty .
\end{align*}
On the other  hand, the Cauchy-Schwarz inequality implies that
\begin{align*}
 \norm{I(\Psi)-I(\Psi_n)}^2_{L_P^2}
  &= \sum_{j=1}^\infty E\left[\abs{\sum_{k=0}^{N-1}\big(L(t_{k+1})-L(t_k)\big)(\Phi_k^\ast f_j- \Phi_{k,n}^\ast f_j)}^2\right]\\
&\le (N-1)\sum_{k=0}^{N-1} \sum_{j=1}^\infty
 E\left[\abs{\big(L(t_{k+1})-L(t_k)\big)(\Phi_k^\ast f_j- \Phi_{k,n}^\ast f_j)}^2\right]\\
&= (N-1)\sum_{k=0}^{N-1}
 E\left[\norm{J_{t_k,t_{k+1}}(\Phi_k - \Phi_{k,n} )}^2_V\right]\\
&\le  (N-1)\sum_{k=0}^{N-1}  \norm{J_{t_k,t_{k+1}}}^2_{L^2_P\to L^2_P} E\left[\norm{\Phi_k-\Phi_{k,n}}_{\L_2}^2\right]\\
&\to 0 \qquad\text{as }n\to\infty,
\end{align*}
where $J_{t_k,t_{k+1}}: L_P^2(\Omega,\F_{t_k};\L_2)\to L_P^2(\Omega,\F;V)$ is the operator defined in
\eqref{eq.defJ} for $\Delta L:=L(t_{k+1})-L(t_k)$.
\end{proof}

\begin{proof} (Theorem \ref{th.intcont})\\
Let $L$ has the cylindrical characteristics $(p,q,\nu)$.
Since the weak second moments exist, it follows from \cite[Cor. 3.12]{DaveMarkus} that
$L$ can be written as
\begin{align*}
  L(t)u
  =\tilde{p}(u)t + \tilde{L}(t)u\qquad\text{for all }t\ge 0,\, u\in U,
\end{align*}
where $\tilde{p}:U\to\R$ is a linear mapping and $(\tilde{L}(t):\,t\ge 0)$ is a cylindrical L{\'e}vy process, which is a cylindrical martingale with $\tilde{L}(0)=0$ $P$ a.s. Since $\tilde{p}(u)=E[L(1)u]$ for all $u\in U$, it follows from the continuity of $L(t):U\to L_P^2(\Omega;\F;\R)$ that $\tilde{p}$ is continuous, which in turn implies that $\tilde{L}(t):U\to L^2_P(\Omega,\F;\R)$ is continuous. The cylindrical process $\tilde{L}$ can be decomposed further into
$\tilde{L}(t)=W(t)+P(t)$. Here,  $(W(t):\, t\ge 0)$ is a Gaussian  cylindrical process\footnote{Note, that although $\tilde{L}(t):U\to L^2_P(\Omega,\F;\R)$ is continuous and $\tilde{L}=W+P$ the linear mappings $W(t):U\to L^2_P(\Omega,\F;\R)$ and $P(t):U\to L^2_P(\Omega,\F;\R)$ are not necessarily continuous. Nevertheless, we call $W$ and $P$ here cylindrical processes for simplicity although in all other situations we require continuity in the definition of a cylindrical random variable.} with characteristic function $\phi_{W(t)}(u) =\exp(-\tfrac{1}{2}t^2 q(u))$, $u\in U$ and $t\ge 0$, see \cite{Riedle:wiener}.
The cylindrical process $(P(t):\, t\ge 0)$ is independent of $W$ and is given by
\begin{align*}
  P(t)u=\int_{\R\setminus\{0\}} \beta\, \hat{N}_u(t, d\beta) \qquad
  \text{for all }u\in U,
  \end{align*}
where $\hat{N}_u$ denotes the compensated Poisson random measure with L{\'e}vy measure $\nu\circ \scapro{\cdot}{u}^{-1}$ of the Poisson random measure  defined by
\begin{align*}
  N_u(t,B):=\sum_{0\le s\le t} \1_B\big((L(s)-L(s-))u\big)
  \qquad\text{for all }B\in\Borel(\R\setminus\{0\}),\,t\ge 0.
\end{align*}
It follows for all $0\le s\le t$ and each $u\in U$ that
\begin{align*}
  &E\left[\abs{(W(t)-W(s))u}^2\right]=(t-s)q(u),\\
  &E\left[\abs{(P(t)-P(s))u}^2\right]=(t-s)\int_U\scapro{u}{a}^2\, \nu(da).
\end{align*}
Consequently,  for all $u\in U$ we obtain
\begin{align*}
  E\left[|\big(\tilde{L}(t)-\tilde{L}(s)\big)u|^2\right]&=
   (t-s)\Big( q(u)+ \int_U \scapro{u}{a}^2\,\nu(da)\Big)\\
   &=(t-s) E\left[|\tilde{L}(1)u|^2\right].
\end{align*}
Let $(\Psi(t):\, t\in [0,T])$ be a simple, $\L_2(U,V)$-valued stochastic process in $\H_0^S(\L_2)$ of the form
\begin{align}\label{eq.defpsi1}
  \Psi(t)= \sum_{k=0}^{N-1} \Phi_k \1_{(t_k,t_{k+1}]}(t),
\end{align}
where $0=t_0< t_1< \cdots < t_N=T$ is a finite sequence of deterministic times
 and each $\Phi_k$ is in $S(\Omega,\F_{t_k};\L_2)$ for $k=0,\dots, N-1$ and is of the form
\begin{align*}
\Phi_k=\sum_{i=1}^{m_k}  \1_{A_{i,k}} \phi_{i,k}
\end{align*}
for disjoint sets $A_{1,k},\dots, A_{m_k,k}\in\F_{t_k}$ and $\phi_{1,k},\dots, \phi_{m_k,k}\in
\L_2(U,V)$.
The martingale property of $\tilde{L}$ implies
the orthogonality of the increments; that is, for two times $0\le t_k\le t_\ell\le T$
and $\Delta \tilde{L}_i :=\tilde{L}(t_{i+1})-\tilde{L}(t_{i})$, $i=k,\ell$ it follows for all $u\in U$ that
\begin{align*}
& E\left[ (\Delta \tilde{L}_k)(\Phi_k^\ast u) (\Delta \tilde{L}_\ell)(\Phi_\ell^\ast u)\right]\\
&\qquad\qquad = E\left[ \left(\sum_{i=1}^{m_{k}}\1_{A_{i,k}}\Delta \tilde{L}_k (\phi_{i,k}^\ast u)\right)
    \left(\sum_{i=1}^{m_{\ell}}\1_{A_{i,\ell}}\Delta \tilde{L}_\ell (\phi_{i,\ell}^\ast u)\right)\right]\\
&\qquad\qquad = E\left[ \sum_{i=1}^{m_{k}}\sum_{i=1}^{m_{\ell}} \1_{A_{i,k}}\1_{A_{i,\ell}} \Delta \tilde{L}_k(\phi_{i,k}^\ast u)\,
    E\left[\Delta \tilde{L}_\ell (\phi_{i,\ell}^\ast u)|\F_{t_{\ell}}\right]\right]\\
&\qquad\qquad= 0.
\end{align*}
The decomposition  $L(t)u=\tilde{p}(u)t+ \tilde{L}(t)u$, $t\ge 0$, $u\in U$
enables us to estimate
\begin{align}\label{eq.estI_t(K)}
E\left[\norm{I(\Psi)}^2_V\right]
& =  \sum_{j=1}^\infty E\left[\abs{\scapro{I(\Psi)}{f_j}}^2\right]\notag\\
& = \sum_{j=1}^\infty E\left[\abs{\sum_{k=0}^{N-1} \Big(L(t_{k+1})-L(t_k)\Big)(\Phi^\ast_k f_j)}^2\right]\notag\\
& \le 2S(\Psi) +2 R(\Psi),
\end{align}
where the sums are defined by
\begin{align*}
S(\Psi)&:= \sum_{j=1}^\infty  E\left[\abs{\sum_{k=0}^{N-1}  \Big(\tilde{L}(t_{k+1})-\tilde{L}(t_k)\Big)(\Phi_k^\ast f_j)}^2\right],\\
R(\Psi)&:= \sum_{j=1}^\infty  E\left[\abs{\sum_{k=0}^{N-1}  \tilde{p}(\Phi^\ast_k f_j)(t_{k+1}-t_k)}^2\right].
 \end{align*}
By the orthogonal increments of $\tilde{L}$ and by using the equality \eqref{eq.condex}, which
  can be applied since $\Phi_k$ is a simple random variable in $S(\Omega,\F_{t_k};\L_2)$ for each $k=0,\dots, N-1$,
we can estimate the first sum in \eqref{eq.estI_t(K)} by
\begin{align*}
S(\Psi)
&=\sum_{j=1}^\infty \sum_{k=0}^{N-1}  E\left[\abs{ \Big(\tilde{L}(t_{k+1})-\tilde{L}(t_k)\Big)(\Phi_k^\ast f_j)}^2\right]\\
& = \sum_{j=1}^\infty \sum_{k=0}^{N-1}
\int_\Omega E\left[\abs{\Big(\tilde{L}(t_{k+1})-\tilde{L}(t_k)\Big)(\Phi_k^\ast (\omega) f_j)}^2\right]\,P(d\omega)\\
& \le  \sum_{j=1}^\infty \sum_{k=0}^{N-1} \norm{\tilde{L}(t_{k+1})-\tilde{L}(t_k)}_{U\to L^2_P}^2 E\left[ \norm{\Phi_k^\ast f_j}^2_U\right]\\
&= \norm{\tilde{L}(1)}_{U\to L^2_P}^2 \sum_{k=0}^{N-1} (t_{k+1}-t_k) E\left[ \norm{\Phi_k}_{\L_2}^2\right]\\
&= \norm{\tilde{L}(1)}_{U\to L^2_P}^2 E\left[ \int_0^T  \norm{\Psi(s)}_{\L_2}^2\,ds\right].
\end{align*}
The second sum in \eqref{eq.estI_t(K)} can be estimated by
\begin{align}\label{eq.estimateR}
R(\Psi)
&= \sum_{j=1}^\infty E\left[\abs{\int_0^T \tilde{p}(\Psi^\ast (s)f_j)\,ds}^2\right]\notag\\
&\le T\sum_{j=1}^\infty E\left[ \int_0^T \abs{\tilde{p}(\Psi^\ast (s)f_j)}^2\,ds \right]\notag\\
&\le T\norm{\tilde{p}}^2_U\sum_{j=1}^\infty E\left[ \int_0^T \norm{\Psi^\ast (s)f_j)}^2_U\,ds \right]\notag\\
&= T\norm{\tilde{p}}^2_U E\left[ \int_0^T \norm{\Psi (s)}^2_{\L_2}\,ds \right].
\end{align}
The last two estimates show the claim for each $\Psi\in \H_0^S(\L_2)$ of the form \eqref{eq.defpsi1}.

Let $\Psi $ be an arbitrary element in $\H_0(\L_2)$.
By Lemma \ref{le.simplesimple} there exists a sequence $\{\Psi_n\}_{n\in\N}\subseteq \H_0^S(\L_2)$
such that $\Psi_n\to \Psi$ in $\H(\L_2)$ and $I(\Psi_n)\to I(\Psi)$ in $L_P^2(\Omega,\F;V)$.  It follows from the first part of the proof that there exists a constant $c>0$, independent of $n$, such that
\begin{align}\label{eq.claimsimple}
  E\left[\norm{I(\Psi_n)}^2_V\right]
  \le c E\left[\int_0^T \norm{\Psi_n(s)}^2_{\L_2}\, ds\right]
  \qquad\text{for all }n\in\N.
\end{align}
The convergence of $\Psi_n$ and $\I(\Psi_n)$ completes the proof.
\end{proof}

Due to Theorem \ref{th.intcont} we are now in the classical situation and
the operator $I$ can be extended to a linear continuous operator $I:\H(\L_2)\to L_P^2(\Omega,\F;V)$ which enables us
to define:
\begin{align*}
  \int_0^T \Psi(s)\, dL(s):= I(\Psi) \qquad \text{for each }\Psi \in \H(\L_2).
\end{align*}

Since we follow It{\^o}'s approach in this work there is also an It{\^o} isometry in the following. In this result, we require  that the quadratic form $q:U\to \R$ of the cylindrical characteristics $(p,q,\nu)$ of the cylindrical L{\'e}vy process is continuous. This covers the standard situation, confer for example the case of a cylindrical Wiener process in \cite{Riedle:wiener}.

\begin{corollary}\label{co.Ito}
Let $L$ be a cylindrical L{\'e}vy process with cylindrical characteristics $(p,q,\nu)$ and with weak second moments.
If $L$ is a cylindrical martingale and the quadratic form $q:U\to \R$ is continuous  then for each $\Psi\in \H(\L_2)$ we have
\begin{align*}
    \norm{\int_0^T \Psi(s)\, dL(s)}_{L^2_P}^2= \int_0^T E\left[\norm{Q^{1/2}\Psi^\ast (s)}_{\L_2}^2
     + \int_{U}\norm{\Psi (s) u}_V^2\, \nu(du)\right]\,ds,
\end{align*}
where $Q\in \L(U,U)$ satisfies $q(u)=\scapro{Qu}{u}$ for each $u\in U$.
\end{corollary}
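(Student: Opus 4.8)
The plan is to prove the identity first on the dense subclass $\H_0^S(\L_2)$, where the exact second-moment computations from the proof of Theorem~\ref{th.intcont} are available, and then to extend it to all of $\H(\L_2)$ by a continuity argument. The decisive simplification comes from the martingale hypothesis: since $(L(t)u:t\ge 0)$ is a real-valued martingale starting at $L(0)u=0$, its mean vanishes, so $\tilde{p}(u)=E[L(1)u]=0$ and the decomposition $L(t)u=\tilde{p}(u)t+\tilde{L}(t)u$ used in the proof of Theorem~\ref{th.intcont} reduces to $L=\tilde{L}$. Consequently the drift remainder $R(\Psi)$ disappears and only the martingale term survives. The one change relative to Theorem~\ref{th.intcont} is that, in place of the bound $E[\abs{\Delta L_k u}^2]\le\norm{\Delta L_k}_{U\to L^2_P}^2\norm{u}_U^2$, I would substitute the exact formula already derived there,
\begin{align*}
  E\left[\abs{(L(t_{k+1})-L(t_k))u}^2\right]=(t_{k+1}-t_k)\Big(q(u)+\int_U\scapro{u}{a}^2\,\nu(da)\Big),
\end{align*}
which turns the inequality into the desired equality.

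For $\Psi\in\H_0^S(\L_2)$ of the form $\sum_{k}\Phi_k\1_{(t_k,t_{k+1}]}$ with $\Phi_k\in S(\Omega,\F_{t_k};\L_2)$, I would first use the orthogonality of the increments of $\tilde{L}$ (established in the proof of Theorem~\ref{th.intcont}) to write $E[\norm{I(\Psi)}_V^2]=\sum_{j}\sum_{k}E[\abs{(L(t_{k+1})-L(t_k))(\Phi_k^\ast f_j)}^2]$. Because each $\Phi_k$ is simple, equality~\eqref{eq.condex} allows me to freeze the random argument and integrate over $\omega$, and inserting the exact variance formula for $u=\Phi_k^\ast(\omega)f_j$ splits each term into a Gaussian and a jump contribution. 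Writing $q(u)=\scapro{Qu}{u}=\norm{Q^{1/2}u}_U^2$ and summing over $j$ converts the Gaussian part into $\norm{Q^{1/2}\Phi_k^\ast(\omega)}_{\L_2}^2$; for the jump part the identity $\scapro{\Phi_k^\ast f_j}{a}=\scapro{f_j}{\Phi_k a}_V$, Parseval, and monotone convergence (all integrands being non-negative) convert $\sum_j\int_U\scapro{\Phi_k^\ast(\omega)f_j}{a}^2\,\nu(da)$ into $\int_U\norm{\Phi_k(\omega)a}_V^2\,\nu(da)$. Re-assembling the factors $(t_{k+1}-t_k)$ as a Lebesgue integral over $[0,T]$ yields the claimed identity for every $\Psi\in\H_0^S(\L_2)$.

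It remains to extend the identity from $\H_0^S(\L_2)$ to $\H(\L_2)$, which I would do by regarding both sides as squared seminorms in $\Psi$ and showing each is $\norm{\cdot}_\H$-continuous. The left-hand side is continuous because the extended operator $I$ is continuous by Theorem~\ref{th.intcont}. For the right-hand side, the continuity of $\tilde{L}(1):U\to L^2_P$ gives $\int_U\scapro{u}{a}^2\,\nu(da)\le E[\abs{\tilde{L}(1)u}^2]\le\norm{\tilde{L}(1)}_{U\to L^2_P}^2\norm{u}_U^2$, whence $\int_U\norm{\Psi(s)u}_V^2\,\nu(du)\le\norm{\tilde{L}(1)}_{U\to L^2_P}^2\norm{\Psi(s)}_{\L_2}^2$ after summing over the basis; and the continuity of $q$ makes $Q$, hence $Q^{1/2}$, a bounded operator, so that $\norm{Q^{1/2}\Psi^\ast(s)}_{\L_2}^2\le\norm{Q^{1/2}}_{\L(U,U)}^2\norm{\Psi(s)}_{\L_2}^2$. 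Both contributions are therefore dominated by a constant multiple of $\norm{\Psi}_\H^2$. Since $\H_0^S(\L_2)$ is dense in $\H(\L_2)$ (by Lemma~\ref{le.simplesimple} together with the density of $\H_0(\L_2)$), two continuous seminorms that agree on a dense subset agree everywhere, and the proof is complete.

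I expect the main obstacle to lie in this last, extension step, for it is exactly here that the continuity of $q$ is indispensable: without it the operator $Q^{1/2}$ need not be bounded and $\norm{Q^{1/2}\Psi^\ast(s)}_{\L_2}$ may even fail to be finite for a general Hilbert-Schmidt $\Psi(s)$, so the right-hand side could not be controlled by $\norm{\cdot}_\H$ and the identity could not be carried beyond simple integrands.
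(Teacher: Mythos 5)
Your treatment of simple processes in $\H_0^S(\L_2)$ is essentially the paper's own calculation: the martingale property forces $\tilde{p}=0$, then orthogonality of increments, equality \eqref{eq.condex}, the exact variance formula from the proof of Theorem \ref{th.intcont}, and Parseval over $\{f_j\}$ give the identity term by term. Your extension step, however, is a genuinely different device from the paper's: you propose to dominate both sides by a constant times $\norm{\Psi}_{\H}^2$ and invoke the fact that two $\norm{\cdot}_{\H}$-continuous seminorms agreeing on the dense subspace $\H_0^S(\L_2)$ agree everywhere, whereas the paper extends by taking $L^1_P$-limits along the approximating sequence of Lemma \ref{le.simplesimple}.

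There is, though, a genuine gap which both your ``Parseval and monotone convergence'' step and your extension step suffer from: the expression $\int_U \norm{\phi u}_V^2\,\nu(du)$ has no a priori meaning, because $\nu$ is only a cylindrical measure --- a finitely additive set function on the algebra $\Z(U)$, $\sigma$-additive only on $\Cc(U,\Gamma)$ for finite $\Gamma$ --- while $u\mapsto \norm{\phi u}_V^2=\sum_j \scapro{u}{\phi^\ast f_j}^2$ depends on infinitely many directions and, unless $\phi$ has finite rank, is not measurable with respect to any $\Cc(U,\Gamma)$ with $\Gamma$ finite. Monotone convergence cannot be applied with respect to a merely finitely additive set function, so your passage from $\sum_j\int_U\scapro{u}{\phi^\ast f_j}^2\,\nu(du)$ (a well-defined sum of genuine one-dimensional integrals) to $\int_U\norm{\phi u}_V^2\,\nu(du)$ is unjustified as stated; more seriously, for a general $\Psi\in\H(\L_2)$ the right-hand side of the corollary is, in your write-up, an undefined object, so the assertion that ``two continuous seminorms agree on a dense set'' cannot even be formulated. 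This is precisely the point to which the paper devotes the second half of its proof: it defines $\int_U\norm{\phi u}_V^2\,\nu(du):=\int_V\norm{v}_V^2\,(\nu\circ\phi^{-1})(dv)$ for $\phi\in\L_2(U,V)$, using that the image cylindrical measure under a Hilbert-Schmidt operator extends to a Radon measure on $V$; it extends this by linearity to simple integrands; and for general $\Psi$ it \emph{defines} $\int_0^T\int_U\norm{\Psi(s)u}_V^2\,\nu(du)\,ds$ as the $L^1_P$-limit of the corresponding quantities along the approximating sequence, identifying them with $E\left[\norm{\int_0^T\Psi_n(s)\,dP(s)}_V^2\right]$ for the compensated Poisson part $P$. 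Your route is repairable: define $\int_U\norm{\phi u}_V^2\,\nu(du):=\sum_j\int_U\scapro{u}{\phi^\ast f_j}^2\,\nu(du)$ (equivalently, via the radonified image measure); your own bound then shows this is the square of a Hilbertian seminorm on $\L_2(U,V)$ dominated by $\norm{L(1)}_{U\to L^2_P}\norm{\phi}_{\L_2}$, hence Lipschitz continuous in $\phi$, which supplies both the measurability of $(s,\omega)\mapsto\int_U\norm{\Psi(s)(\omega)u}_V^2\,\nu(du)$ and the continuity needed for your density argument. Without some such clarification, however, the proof is incomplete at exactly its decisive point.
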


\begin{proof}
Since $L$ is a cylindrical martingale it can be decomposed
into $L(t)u= W(t)u+ P(t)u$ for all $t\ge 0$ and $u\in U$, where $W$ and $P$ are defined in the proof of Theorem \ref{th.intcont}. However, since $q$ is required to be continuous the mapping $W(t):U\to L^2_P(\Omega;\R)$ is continuous and thus, also $P(t):U\to L^2_P(\Omega;\R)$.
It follows for all $0\le s\le t$ and each $u\in U$ that
\begin{align*}
  &E\left[\abs{(W(t)-W(s))u}^2\right]=(t-s)\scapro{Qu}{u},\\
  &E\left[\abs{(P(t)-P(s))u}^2\right]=(t-s)\int_U\scapro{u}{a}^2\, \nu(da).
\end{align*}
Let $\Psi\in \H_0^S(\L_2)$ be of the form \eqref{eq.defpsi1}.
The orthogonal increments of $(L(t):\, t\ge 0)$
imply as in the proof of Theorem \ref{th.intcont} that
\begin{align}\label{eq.Itocal}
&E\left[\norm{\int_0^T \Psi(s)\, dL(s)}_V^2\right]\notag\\
&\qquad =\sum_{j=1}^\infty \sum_{k=0}^{N-1}  E\left[\abs{ \Big(L(t_{k+1})-L(t_k)\Big)(\Phi_k^\ast f_j)}^2\right]\notag\\
&\qquad =\sum_{j=1}^\infty\sum_{k=0}^{N-1} \sum_{i=1}^{m_k}
 \Big(E\left[ \1_{A_{i,k}}\abs{(W(t_{k+1})-W(t_k))(\phi^\ast_{i,k} f_j)\right.\right.\notag\\
&\hspace*{6cm}\left.\left.\left.    + (P(t_{k+1})-P(t_k))(\phi^\ast_{i,k} f_j)
}^2\right]\right) \notag\\
&\qquad =\sum_{j=1}^\infty\sum_{k=0}^{N-1}(t_{k+1}-t_k) \sum_{i=1}^{m_k}
E[\1_{A_{i,k}}]  \left( \scapro{Q\phi_{i,k}^\ast f_j}{\phi_{i,k}^\ast f_j}
  + \int_U\scapro{\phi_{i,k}^\ast f_j}{u}^2\, \nu(du)\right) \notag\\
&\qquad =\sum_{j=1}^\infty\sum_{k=0}^{N-1}(t_{k+1}-t_k)
E\left[ \scapro{Q^{1/2}\Phi_{k}^\ast f_j}{Q^{1/2}\Phi_{k}^\ast f_j}
  + \int_U\scapro{\Phi_{k}^\ast f_j}{u}^2\, \nu(du)\right] \notag\\
&\qquad =\sum_{k=0}^{N-1}(t_{k+1}-t_k)
   E\left[\norm{Q^{1/2}\Phi_k^\ast}_{\L_2}^2
  + \int_U \norm{\Phi_k u}_V^2  \, \nu(du)\right] \notag\\
&\qquad =\int_0^T\left(E\left[\norm{Q^{1/2}\Psi^\ast (s)}_{\L_2}^2\right]
     + \int_{U}\norm{\Psi (s) u}_V^2\, \nu(du)\right)\,ds,
\end{align}
where we used the fact that $Q$ can be factorised into
$Q=(Q^{1/2})^\ast Q^{1/2}$.

 Since the cylindrical measure $\nu$ is per se only defined on  the algebra $\Z(U)$ whereas the norm is not measurable with respect to $\Z(U)$ the integral in \eqref{eq.Itocal} requires a clarification:
for a Hilbert-Schmidt operator $\phi\in \L_2(U,V)$ the image cylindrical measure $\nu\circ \phi^{-1}$ extends to a Radon measure
with finite second moment. Thus, we can define
\begin{align*}
  \int_U \norm{\phi u}_V^2 \, \nu(du):=\int_V \norm{v}_V^2\, (\nu\circ \phi^{-1})(dv)<\infty.
\end{align*}
For fixed $\omega\in \Omega$,  this argument can be extended by linearity to the integration of $\norm{\Phi_k (\omega) u}_V^2$ for $\Phi_k\in S(\Omega,\F_{t_k};\L_2)$  and
$\norm{\Psi(s)(\omega)u}_V^2$ for $\Psi\in \H_0^S(\L_2)$ and $u\in U$ with respect to the cylindrical measure $\nu$ in \eqref{eq.Itocal}.

Let a sequence $\{\Psi_n\}_{n\in\N}\subseteq \H_0^S(\L_2)$ converges to $\Psi\in \H(\L_2)$.
Since $(P(t):\, t\ge 0)$ is also a cylindrical L{\'e}vy process, we obtain by the same calculation
as in  \eqref{eq.Itocal}  that
 \begin{align*}
E\left[\norm{\int_0^T \Psi_n(s)\, dP(s)}^2_V\right]=E\left[ \int_0^T \int_U\norm{\Psi_n(s)u}_V^2\, \nu(du)ds\right].
 \end{align*}
 Consequently, we can define
\begin{align*}
  \int_0^T \int_U \norm{\Psi(s)u}_V^2 \, \nu(du)ds
  :=\lim_{n\to\infty}  \int_0^T \int_U\norm{\Psi_n(s)u}_V^2\, \nu(du)ds,
\end{align*}
where the convergence takes place in $L_P^1(\Omega,\F;\R)$
Since the convergence of all other terms in the first and last line in
\eqref{eq.Itocal} is obvious, an application of Lemma \ref{le.simplesimple}
completes the proof.
\end{proof}

In order to obtain a stochastic process we define for $\Psi\in \H(\L_2)$
\begin{align*}
  \int_0^t \Psi(s)\, dL(s):=\int_0^T \1_{[0,t]}(s)\Psi(s)\, dL(s)
  \qquad  \text{for  }t\in [0,T].
\end{align*}

The following result completes the introduction of the integral with respect to cylindrical L{\'e}vy processes.
\begin{corollary}\label{co.process}
If $(L(t):\,t\ge 0)$ is a cylindrical L{\'e}vy process with weak second moments then for every $\Psi\in\H(\L_2)$ there exists a $V$-valued adapted stochastic process $I$ with strong second moments and with c\`adl\`ag paths in $V$ which is a modification of
\begin{align*}
  \left(\int_0^t \Psi(s)\, dL(s):\, t\in [0,T]\right).
\end{align*}
If in addition, $(L(t):\, t\ge 0)$ is a cylindrical martingale then
$I$ is a martingale in $V$ with strong second moments.
\end{corollary}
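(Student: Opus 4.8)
The plan is to peel off the drift and reduce everything to a $V$-valued square-integrable martingale, for which a maximal inequality upgrades the fixed-time $L^2$-convergence already at hand to the uniform-in-time convergence needed for a c\`adl\`ag modification. Using the decomposition $L(t)u=\tilde p(u)t+\tilde L(t)u$ from the proof of Theorem \ref{th.intcont}, where $\tilde L=W+P$ is a cylindrical martingale, I would first treat the drift. Since $\tilde p:U\to\R$ is continuous and linear, the Riesz representation gives $p_0\in U$ with $\tilde p(u)=\scapro{u}{p_0}$, and testing the drift contribution of $\int_0^t\Psi\,dL$ against $v\in V$ yields $\int_0^t\scapro{v}{\Psi(s)p_0}_V\,ds$. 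Hence this contribution equals the $V$-valued Bochner integral $D(t):=\int_0^t\Psi(s)p_0\,ds$, which is adapted, has continuous paths, and has strong second moments (bounding $E\sup_{t\le T}\norm{D(t)}_V^2$ by $\norm{p_0}_U^2\,T\,E\int_0^T\norm{\Psi(s)}_{\L_2}^2\,ds$). It therefore remains to produce a c\`adl\`ag modification of the martingale part $M(t)$, the integral of $\Psi$ against $\tilde L$.

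For simple integrands $\Psi\in\H_0^S(\L_2)$ of the form \eqref{eq.defpsi1} I would exploit that each deterministic Hilbert-Schmidt operator $\phi_{i,k}$ radonifies the cylindrical martingale $\tilde L$: the family $(\tilde L_{\phi}(t))_t$ defined by $\scapro{\tilde L_\phi(t)}{v}=\tilde L(t)(\phi^\ast v)$ is a genuine $V$-valued process with independent stationary increments and finite second moments (this is the radonification of Section \ref{se.RandomHS} combined with the L\'evy property inherited by $\tilde L$), hence admits a c\`adl\`ag modification. Then
\[
M(t)=\sum_{k=0}^{N-1}\sum_{i=1}^{m_k}\1_{A_{i,k}}\big(\tilde L_{\phi_{i,k}}(t_{k+1}\wedge t)-\tilde L_{\phi_{i,k}}(t_k\wedge t)\big)
\]
is a finite sum of adapted c\`adl\`ag processes, the indicators being $\F_{t_k}$-measurable and constant in $t$, so $M$ is adapted and c\`adl\`ag. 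The cylindrical martingale property of $\tilde L$ together with the $\F_{t_k}$-measurability of $\Phi_k^\ast v$ (the orthogonal-increments computation in the proof of Theorem \ref{th.intcont}) shows $\scapro{M(t)}{v}$ is a real martingale for every $v$; as $M(t)\in L^2_P(\Omega,\F;V)$, this upgrades to $M$ being a $V$-valued square-integrable martingale.

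For arbitrary $\Psi\in\H(\L_2)$ I would combine the density of $\H_0(\L_2)$ in $\H(\L_2)$ with Lemma \ref{le.simplesimple} to choose $\Psi_n\in\H_0^S(\L_2)$ with $\norm{\Psi-\Psi_n}_{\H}\to 0$, writing $M_n$ for the associated c\`adl\`ag martingales. Since $M_n-M_m$ is again a c\`adl\`ag $V$-valued martingale, Doob's $L^2$-inequality gives
\[
E\Big[\sup_{t\le T}\norm{M_n(t)-M_m(t)}_V^2\Big]\le 4\,E\big[\norm{M_n(T)-M_m(T)}_V^2\big],
\]
and the right-hand side tends to $0$ by the continuity estimate for the integral against $\tilde L$ established in the proof of Theorem \ref{th.intcont} (the bound on $S(\Psi)$). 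Thus $(M_n)$ is Cauchy in the complete space of c\`adl\`ag $V$-valued square-integrable martingales under $(E\sup_{t\le T}\norm{\cdot}_V^2)^{1/2}$, so its limit $M$ is c\`adl\`ag, adapted, square-integrable, and a martingale. Since for each fixed $t$ the random variables $M_n(t)$ converge in $L^2$ both to $M(t)$ and to the martingale part of $\int_0^t\Psi\,dL$, the process $M$ is a modification of the latter, and $I(t):=D(t)+M(t)$ is the required adapted c\`adl\`ag modification with strong second moments. If $L$ is itself a cylindrical martingale then $\tilde p=0$, so $D\equiv 0$ and $I=M$ is a $V$-valued martingale, giving the final assertion.

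The main obstacle is precisely the passage from fixed-time $L^2$-convergence, which is all that Theorem \ref{th.intcont} directly supplies, to the uniform-in-time convergence needed to inherit c\`adl\`ag paths; Doob's maximal inequality is what closes this gap, and it is applicable only after the drift has been removed so that the relevant part is a true $V$-valued martingale. A secondary point demanding care is the justification in the simple case that radonification yields an honest c\`adl\`ag $V$-valued L\'evy process rather than merely a family of random variables defined up to null sets, which rests on the independent stationary increments of $\tilde L$ and the classical existence of c\`adl\`ag modifications for Hilbert-space L\'evy processes.
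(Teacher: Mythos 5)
Your proof is correct, and while it keeps the paper's overall skeleton---the decomposition $L(t)u=\tilde{p}(u)t+\tilde{L}(t)u$ from the proof of Theorem \ref{th.intcont} and a separate treatment of drift and martingale parts---the core regularisation step is handled by a genuinely different device. The paper never builds c\`adl\`ag versions for simple integrands: it shows directly that for arbitrary $\Psi\in\H(\L_2)$ the process $t\mapsto\tilde{I}(\Psi\1_{[0,t]})$ is a $V$-valued martingale (extending the martingale property from $\H_0^S(\L_2)$ by continuity of conditional expectation) which is continuous in probability (since $\norm{\Psi\1_{(s,t]}}_{\H}\to 0$ as $s\to t$ and $\tilde{I}$ is continuous), and then invokes an infinite-dimensional Doob regularisation theorem (\cite[Th.~3.13]{PeszatZab}) to obtain the c\`adl\`ag modification in one stroke; the drift part is shown to have continuous paths by an estimate analogous to \eqref{eq.estimateR}. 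You instead follow the classical It\^o route: radonify $\tilde{L}$ through the deterministic operators $\phi_{i,k}$ into honest $V$-valued L\'evy processes admitting c\`adl\`ag modifications, assemble these into c\`adl\`ag square-integrable martingales for simple integrands, and pass to the limit via Doob's $L^2$ maximal inequality and completeness of the space of c\`adl\`ag $L^2$-martingales under $\left(E\left[\sup_{t\le T}\norm{\cdot}_V^2\right]\right)^{1/2}$. Your route needs two inputs the paper avoids (the existence of c\`adl\`ag modifications of Hilbert-space-valued L\'evy processes---itself a regularisation theorem, also found in Peszat--Zabczyk---and the completeness argument), but it buys uniform-in-time convergence $E\left[\sup_{t\le T}\norm{M_n(t)-M(t)}_V^2\right]\to 0$, which the paper's fixed-time argument does not deliver and which is often useful in SPDE estimates. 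Your drift treatment via the Riesz representation, writing $D(t)=\int_0^t\Psi(s)p_0\,ds$ as a Bochner integral with continuous paths, is a clean equivalent of the paper's argument. One step you assert rather than prove: the identification $I(\Psi\1_{[0,t]})=D(t)+\tilde{I}(\Psi\1_{[0,t]})$ $P$-a.s.\ for general, non-simple $\Psi$; this is precisely the paper's extension of \eqref{eq.F} from $\H_0^S(\L_2)$ to $\H(\L_2)$ and requires the same routine approximation argument (a.s.\ convergence along a subsequence), so it should be stated explicitly, though it is not a substantive gap.
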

\begin{proof}
  Decompose $L$ as in the proof of Theorem \ref{th.intcont} into the cylindrical martingale $\tilde{L}$
  and the linear, continuous mapping $\tilde{p}:U\to \R$, i.e. $L(t)u=\tilde{p}(u)t+\tilde{L}(t)u$, $t\ge 0$ for all $u\in U$.
Since $\tilde{L}$ is again a cylindrical L{\'e}vy process the
stochastic integral $\tilde{I}(\Psi):=\int_0^T \Psi(s)\, d\tilde{L}(s)$ is well defined for $\Psi \in \H(\L_2)$
and we obtain the decomposition
  \begin{align}\label{eq.intdecomp}
I(\Psi)= \tilde{I}(\Psi) + F(\Psi),
  \end{align}
 where $F(\Psi)$ is defined as the $V$-valued random variable $F(\Psi):=I(\Psi) - \tilde{I}(\Psi)$.

 For $\Psi\in \H_0^S(\L_2)$
 the random variable $F$ satisfies
\begin{align}\label{eq.F}
  \scapro{F(\Psi)}{v}=\int_0^T \tilde{p}(\Psi^\ast (s)v)\, ds
   \qquad\text{for all }v\in V.
\end{align}
If a sequence $\{\Psi_n\}_{n\in\N}\subseteq \H_0^S(\L_2)$ converges to $\Psi$ in $\H(\L_2)$ then $F(\Psi_n)\to F(\Psi)$ in $L^2_P(\Omega,\F;V)$ by the continuity of $I$ and $\tilde{I}$.
On the other hand, by changing to a subsequence if necessary we can assume that
\begin{align*}
  \int_0^T \norm{\Psi_n(s)-\Psi(s)}_{\L_2}^2\,ds \to 0
  \quad\text{$P$-a.s. as $n\to\infty$.}
\end{align*}
Together with a similar estimate as in \eqref{eq.estimateR}, this implies for each $v\in V$ that
\begin{align*}
\abs{\int_0^T \tilde{p}\big((\Psi_n^\ast(s)-\Psi^\ast(s))v\big)\,ds }^2
&\le T\norm{\tilde{p}}^2_U \int_0^T \norm{(\Psi_n^\ast- \Psi^\ast) (s)v}^2_U\,ds \notag\\
&\le T\norm{\tilde{p}}^2_U \norm{v}^2_V  \int_0^T \norm{(\Psi_n- \Psi)(s)}^2_{\L_2}\,ds \\
&\to 0 \quad\text{$P$-a.s. as $n\to\infty$},
\end{align*}
which shows that \eqref{eq.F} is also satisfied for all  $\Psi\in \H(\L_2)$.

Define $\tilde{I}(\Psi)(t):=\tilde{I}(\Psi \1_{[0,t]})$ for $\Psi\in \H(\L_2)$.
If $\Psi\in \H_0^S(\L_2)$ it
is easy to see that the stochastic process  $\tilde{M}(\Psi):=(\tilde{I}(\Psi)(t):\, t\in [0,T])$ is an $V$-valued martingale. Due to the continuity of
the conditional expectation as an operator on $L_P^2(\Omega,\F;V)$ one can extend this result to
$\Psi\in \H(\L_2)$ by  Theorem \ref{th.intcont} and Lemma \ref{le.simplesimple}.

For every $\Psi\in \H(\L_2)$ we have
\begin{align*}
  \norm{\Psi \1_{(s,t]}}_{\H}^2
  =\int_s^t E[\norm{\Psi(r)}_{\L_2}^2]\, dr
  \to 0 \qquad\text{as }s\to t.
\end{align*}
Therefore, the continuity of the integral operator $\tilde{I}:\H(\L_2)\to L_P^2(\Omega,\F;V)$ implies for each $\epsilon>0$ that
\begin{align*}
  P\left(\norm{\tilde{\I}(\Psi)(t)-\tilde{I}(\Psi)(s)}_V\ge \epsilon\right)
&\le \frac{1}{\epsilon^2}
 E\left[ \norm{\tilde{\I}(\Psi)(t)-\tilde{I}(\Psi)(s)}_V^2\right] \\
& =\frac{1}{\epsilon^2}
  \norm{\tilde{I}(\Psi \1_{(s,t]})}_{L^2}^2\\
&\to 0 \qquad \text{as }s\to t,
\end{align*}
which shows that the $V$-valued martingale
$\tilde{M}(\Psi)$ is continuous in probability. It follows by an infinite dimensional version
of Doob's regularisation theorem (see e.g. \cite[Th. 3.13]{PeszatZab}) that there exists a modification of $\tilde{M}(\Psi)$ with c\`adl\`ag paths.

Since $\tilde{p}$ is linear it follows from \eqref{eq.F} for $\Psi\in \H(\L_2)$ and $0\le s\le t$ that
\begin{align*}
  \norm{F(\Psi\1_{[0,t]})-F(\Psi\1_{[0,s]})}_V
  &= \sum_{j=1}^\infty \abs{\int_s^t \tilde{p}(\Psi^\ast (r)f_j )\, dr}^2\\
  &\le (t-s) \sum_{j=1}^\infty \int_s^t \abs{\tilde{p}(\Psi^\ast(r)f_j)}^2\, dr \\
  &\le (t-s)\norm{\tilde{p}}_{U}^2 \int_0^T \norm{\Psi(r)}_{\L_2}^2\, dr.
\end{align*}
Consequently, the stochastic process $(F(\Psi \1_{[0,t]}): \, t\in [0,T])$ has continuous trajectories.
 Due to the  decomposition \eqref{eq.intdecomp} this shows the existence of a modification of $(\int_0^t \Psi(s)\, dL(s):\, t\in [0,T])$  with c\`adl\`ag paths for each $\Psi\in \H(\L_2)$.

If $L$ is a cylindrical martingale it follows $\tilde{p}=0$ in the above calculations, which shows the second claim.
\end{proof}

\section{Differential Equations}\label{se.diff}

The developed theory of stochastic integration can be applied to
a stochastic partial differential equation  of the form
\begin{align}\label{eq.sde}
\begin{split}
  dX(t)&= \left( AX(t)+ F(X(t))\right)\, dt + G(X(t))\, dL(t) \qquad\text{for all }t\in [0,T],\\
   X(0)&=X_0,
  \end{split}
   \end{align}
where $A:\text{dom}(A)\subseteq V\to V$ is the generator of a $C_0$-semigroup $(S(t))_{t\ge 0}$ on $V$,
$F:V\to V$ and $G:V\to \L(U,V)$ are some mappings and the initial condition $X_0$ is
an $\F_0$-measurable random variable in $L^2_P(\Omega,\F_0;V)$. The random noise $(L(t):\,t\in [0,T])$ is a cylindrical L{\'e}vy process in $U$  with weak second moments.

Since the stochastic integral with respect to the generalised noise $(L(t):\, t\in [0,T])$ is developed in the last sections one can
follow very closely the existing literature to derive existence, uniqueness and further properties of solutions of \eqref{eq.sde} or even more general equations. For that reason, we keep this section very short and indicate only the approach in the standard situation.

A predictable $V$-valued stochastic process $(X(t):\, t\in [0,T])$ is
called a {\em mild solution of Equation \eqref{eq.sde}} if for all $t\in [0,T]$ it satisfies $P$-a.s. that
\begin{align*}
   X(t)=S(t)X_0+\int_0^t S(t-s)F(X(s))\, ds + \int_0^t S(t-s)G(X(s))\, dL(s).
\end{align*}
The last integral on the right hand side is the stochastic integral developed in the previous sections.

In the following result we employ the standard assumptions, linear growth and Lipschitz conditions, which guarantee the existence of a mild solution, see for example
\cite[p. 65]{DaPratoZab2} or \cite[p. 148]{PeszatZab}.
\begin{theorem}
If there exists a constant $c>0$ such that for all $v_1,v_2\in V$ and
all $t\in [0,T]$ the operators satisfy
\begin{align*}
  \norm{S(t)(Fv_1)}_V + \norm{S(t)(Gv_1)}_{\L_2}\le c(1+\norm{v_1}_V),\\
\norm{S(t)(F(v_1)-F(v_2))}_V+ \norm{S(t)(G(v_1)-G(v_2))}_{\L_2}
\le c\norm{v_1-v_2}_V,
\end{align*}
then there exists a unique (up to modification) mild solution $X$
of \eqref{eq.sde} satisfying $\sup_{t\in [0,T]}E\left[\norm{X(t)}_V^2\right]<\infty$.
\end{theorem}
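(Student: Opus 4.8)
The plan is to realise the mild solution as the unique fixed point of the solution map and to apply the Banach fixed-point theorem, following the classical scheme of \cite[p.~65]{DaPratoZab2} and \cite[p.~148]{PeszatZab}. First I would fix the space in which to search: let $\mathcal{Z}_T$ be the space of equivalence classes of predictable $V$-valued processes $X=(X(t):t\in[0,T])$ equipped with the complete norm
\[
  \norm{X}_{\mathcal{Z}_T}^2:=\sup_{t\in[0,T]} E\left[\norm{X(t)}_V^2\right],
\]
and on $\mathcal{Z}_T$ define the operator
\[
  (\mathcal{K}X)(t):=S(t)X_0+\int_0^t S(t-s)F(X(s))\,ds+\int_0^t S(t-s)G(X(s))\,dL(s),
\]
so that a mild solution is precisely a fixed point of $\mathcal{K}$.

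The first step is to show $\mathcal{K}$ is well defined and maps $\mathcal{Z}_T$ into itself. Since $(S(t))_{t\ge0}$ is a $C_0$-semigroup there is $M_T$ with $\norm{S(t)}_{\L(V,V)}\le M_T$ on $[0,T]$, so the term $S(t)X_0$ is bounded by $M_T^2\,E[\norm{X_0}_V^2]$, and the drift integral is controlled by the linear growth of $S(t)(Fv_1)$ together with Cauchy--Schwarz in the time variable. The structurally important point concerns the stochastic convolution: although $G$ maps $V$ only into $\L(U,V)$, the hypotheses are phrased in terms of $S(t)(Gv_1)$, so that $s\mapsto S(t-s)G(X(s))$ is in fact $\L_2(U,V)$-valued and lies in $\H(\L_2)$. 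The continuity of the integral operator from Theorem~\ref{th.intcont} then gives
\[
  E\left[\norm{\int_0^t S(t-s)G(X(s))\,dL(s)}_V^2\right]\le c\int_0^t E\left[\norm{S(t-s)G(X(s))}_{\L_2}^2\right]\,ds,
\]
and the linear growth of $S(t)(Gv_1)$ bounds the right-hand side by a constant times $\int_0^t\big(1+E[\norm{X(s)}_V^2]\big)\,ds$. Combining the three estimates yields $\mathcal{K}X\in\mathcal{Z}_T$.

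The second step is the contraction estimate. For $X,Y\in\mathcal{Z}_T$ the same three bounds, now invoking the Lipschitz hypotheses in place of linear growth, give
\[
  E\left[\norm{(\mathcal{K}X)(t)-(\mathcal{K}Y)(t)}_V^2\right]\le c\int_0^t E\left[\norm{X(s)-Y(s)}_V^2\right]\,ds
\]
with $c$ depending only on $M_T$, $T$ and the Lipschitz constant. The \emph{main obstacle} is that this estimate is a contraction only for small $T$; to reach arbitrary $T$ I would iterate $\mathcal{K}$ and use the gain of a factor $t^n/n!$ after $n$ applications, or equivalently replace $\norm{\cdot}_{\mathcal{Z}_T}$ by the weighted norm $\sup_{t\in[0,T]}e^{-\beta t}E[\norm{X(t)}_V^2]$ for $\beta$ large. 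Either device makes some power $\mathcal{K}^n$ a strict contraction, so $\mathcal{K}$ has a unique fixed point $X\in\mathcal{Z}_T$ by the Banach fixed-point theorem; this $X$ is the unique mild solution, and the moment bound $\sup_{t\in[0,T]}E[\norm{X(t)}_V^2]<\infty$ holds by construction. The remaining routine matters — the predictability of $\mathcal{K}X$ and the measurability of the integrands — follow from the predictability of $X$ and the strong continuity of $(S(t))_{t\ge0}$.
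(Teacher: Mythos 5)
Your proposal is correct and follows essentially the same route as the paper: a Banach fixed-point argument for the solution map on the space of predictable $V$-valued processes with bounded second moments, with the contraction obtained via the weighted norm $\sup_{t\in[0,T]}e^{-\beta t}E\left[\norm{\cdot}_V^2\right]$ for large $\beta$ (the paper's choice; your iteration alternative is equivalent) and the stochastic convolution controlled by the continuity of the integral operator from Theorem~\ref{th.intcont}. The only cosmetic difference is that you carry the $S(t)X_0$ term and the linear-growth/self-mapping step explicitly, which the paper leaves implicit.
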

\begin{proof}
The standard proof  applies Banach's fixed point theorem
in the space
\begin{align*}
  \mathcal{X_{T,\beta}}
  :=\left\{Y:[0,T]\times\Omega\to V\text{ predictable and}
    \sup_{t\in [0,T]} E\left[\norm{Y(t)}_V^2\right]<\infty\right\},
\end{align*}
and  equipped for some $\beta>0$ with the norm
\begin{align*}
  \norm{Y}_{T,\beta}:= \left(\sup_{t\in [0,T]} e^{-\beta t} E\left[\norm{Y(t)}_V^2\right]\right)^{1/2}.
\end{align*}
The operator, which will be shown to be a contraction for an appropriate $\beta>0$, is defined by
\begin{align*}
  K: {\mathcal X}_{T,\beta}\to {\mathcal X}_{T,\beta}, \qquad K(Y):=K_1(Y)+K_2(Y),
\end{align*}
where for all $t\in [0,T]$ we define
\begin{align*}
 & K_1(Y)(t):=\int_0^t S(t-s)F(Y(s))\, ds\\
 & K_2(Y)(t):=\int_0^t S(t-s)G(Y(s))\,dL(s).
\end{align*}
The integrals, and in particular the newly developed stochastic integral, are well defined due to the assumed linear growth condition. There
exists a predictable version of the stochastic
processes $(K_1(Y)(t):\, t\in [0,T])$ and $(K_2(Y)(t):\, t\in [0,T])$, since they are adapted and stochastically continuous. For $K_2(Y)$ this can be seen as in the proof of Corollary \ref{co.process}.

The Lipschitz continuity shows that there exists a constant $c_1>0$ such that
\begin{align}\label{eq.contraction1}
 \norm{K_1(Y_1)-K_1(Y_2)}_{T,\beta}^2\le c_1 \norm{Y_1-Y_2}_{T,\beta}^2
 \qquad\text{for all }Y_1,Y_2\in {\mathcal X}_{T,\beta}.
\end{align}
The continuity of the integral operator $I:\H(\L_2)\to L_P^2 (\Omega,\F;V)$ implies that there exists a constant $c_2$ such that
\begin{align*}
 &\norm{K_2(Y_1)-K_2(Y_2)}_{T,\beta}^2\\
 &\qquad\qquad = \sup_{t\in [0,T]}e^{-\beta t} \norm{\int_0^t S(t-s)(G(Y_1(s))-G(Y_2(s)))\,dL(s)}_{L^2_P}^2\\
 &\qquad\qquad \le c_2 \sup_{t\in [0,T]}e^{-\beta t} E\left[\int_0^t \norm{ S(t-s)(G(Y_1(s))-G(Y_2(s)))}_{\L_2}^2\,ds\right].
\end{align*}
Applying the Lipschitz continuity results in
\begin{align}\label{eq.contraction2}
 \norm{K_2(Y_1)-K_2(Y_2)}_{T,\beta}^2\le c_3 \norm{Y_1-Y_2}_{T,\beta}^2
 \qquad\text{for all }Y_1,Y_2\in {\mathcal X}_{T,\beta},
\end{align}
for a constant $c_3>0$. Both inequalities \eqref{eq.contraction1} and
\eqref{eq.contraction2} show that $K$ is a contraction if we choose $\beta$ large enough,  which completes the proof.
\end{proof}

{\bf Acknowledgement:} This work was initiated by an inspiring discussion with A. Jakubowski in Ascona 2011 for
which I thank him.

\bibliographystyle{plain}

\end{document}